\definecolor{darkbrown}{rgb}{.5,.1,.1} 
\title[On the Number of Circuit-cocircuit Reversal Classes]{On the Number of Circuit-cocircuit Reversal Classes of an Oriented Matroid}
\author{Emeric Gioan}
\address{Emeric Gioan: CNRS, LIRMM, Universit\'e de Montpellier, France}
\email{gioan@lirmm.fr}
\author{Chi Ho Yuen}
\address{Chi Ho Yuen:
School of Mathematics, Georgia Institute of Technology\\
Atlanta, Georgia 30332-0160, USA
}
\email{cyuen7@math.gatech.edu}
\date{\today}  
\numberwithin{equation}{section}
\theoremstyle{definition}
\newtheorem{theorem}{Theorem}[section]
\newtheorem{corollary}[theorem]{Corollary}
\newtheorem{proposition}[theorem]{Proposition}
\DeclareRobustCommand{\rchi}{{\mathpalette\irchi\relax}}
\newcommand{\irchi}[2]{\raisebox{\depth}{$#1\chi$}}
\begin{document}

\begin{abstract}
The first author introduced the circuit-cocircuit reversal system of an oriented matroid, and showed that when the underlying matroid is regular, the cardinalities of such system and its variations are equal to special evaluations of the Tutte polynomial (e.g., the total number of circuit-cocircuit reversal classes equals $t(M;1,1)$, the number of bases of the matroid). 
By relating these classes to activity classes studied by the first author and Las Vergnas, 
we give an alternative proof of the above results and a proof of the converse statements that these equalities fail whenever the underlying matroid is not regular. Hence we extend the above results to an equivalence of matroidal properties, thereby giving a new characterization of regular matroids.
\end{abstract}

\maketitle

\vspace{-5mm}
\section{Introduction}

The {\em cycle-cocycle reversal system} of a graph, introduced in \cite{gioan2007enumerating}, consists of equivalence classes of orientations of the graph with respect to the {\em cycle-cocycle reversal relation}, that is, two orientations are equivalent if they differ by successive reversals of directed (co)cycles.
As proven in the same paper, the number of cycle-cocycle reversal classes of $G$ equals the evaluation $t(G;1,1)$ of its Tutte polynomial, which is also the number of spanning trees of $G$.
This result can be thought as a ``linear algebra free'' formulation of Kirchhoff's Matrix-Tree Theorem.
In particular, the cycle-cocycle reversal system is related to various combinatorial objects associated to the graph Laplacian, notably the {\em sandpile group} (also known as the {\em critical group} or {\em Jacobian group} in the literature).
The notion of {\em chip-firing} (related to the {\em abelian sandpile model}) can also be partially interpreted by {\em cycle-cocycle reversals}.
We refer the reader to \cite[Section 5]{gioan2007enumerating} and \cite{backman2014riemann} for details.

\smallskip

The above setup, and further results involving Tutte polynomial evaluations for related reversal classes, were generalized to {\em regular matroids} in \cite{gioan2008circuit} (in terms of a {\em circuit-cocircuit reversal system}).
This provides an approach to generalize the theory of sandpile groups and chip-firing to regular matroids (different from the approach in \cite{MerinoDissertation}).
Such topic has been investigated since then in \cite{bby2017geometric} in a unifying way.

\smallskip

The circuit-cocircuit reversal system is actually defined for general oriented matroids, and it was shown in \cite{gioan2008circuit} that the aforementioned Tutte polynomial evaluations are not available for $U_{2,k}$, $k\geq 4$.
Since $U_{2,4}$ is the excluded minor for the class of regular matroids within oriented matroids, it was expected that these enumerative results are not available when the oriented matroid is not regular; we will prove this rigorously in this note.
In particular, we extend the results in \cite{gioan2008circuit} to an equivalence of (oriented) matroid properties.
We use a noteworthy general relation between circuit-cocircuit reversal classes and activity classes of (re)orientations, which are known to be enumerated using the same Tutte polynomial evaluations, and have been introduced in the context of {\em active bijections} \cite{gioan2002thesis,gioan2005activity, gioanlasvergans2018AB, gioanlasvergans2018AB2}.
We also use this to give a short proof of results in~\cite{gioan2008circuit}.

\section{Preliminaries}

We assume that the reader is familiar with the basic theory of oriented matroids \cite{bjorner1999oriented}.
Given an oriented matroid $M$ on $E$, we identify the set of its reorientations with $2^E$ via the bijection associating $A\subseteq E$ with $_{-A}M$.

Let $M$ be an oriented matroid on $E$.
Following \cite{gioan2008circuit}, let us write $M \sim -_CM$ if $C$ is a positive circuit or cocircuit of $M$ (we say that $-_CM$ is obtained from $M$ by a {\em circuit or cocircuit reversal}, respectively).
Applying the same rule to reorientations $-_{A}M$ for $A\subseteq E$ (i.e., writing $-_{A}M\sim-_{C\triangle A}M$ when $C$ is a positive circuit or cocircuit of $-_{A}M$) and taking the transitive closure of the relation, we obtain an equivalence relation, whose equivalence classes are called {\em circuit-cocircuit reversal classes} of reorientations of~$M$.
Allowing only the use of positive circuits, resp. positive cocircuits, yields by the same way the \emph{circuit reversal classes}, resp. the \emph{cocircuit reversal classes}.
As observed in \cite{gioan2008circuit}, circuit reversals act on the totally cyclic part of $M$ (the union of positive circuits of $M$) and cocircuit reversals act on the acyclic part of $M$ (the union of positive cocircuits of $M$).

It was shown and geometrically illustrated in \cite[Proposition 2 and Figure 1]{gioan2008circuit} that, for any integer $k$, the number of acyclic cocircuit reversal classes of a uniform oriented matroid $U_{2,k}$ equals $1$, or $2$, if $k$ is even, or odd, respectively.
On the other hand, it is well-known that an oriented matroid $M$ is regular if and only if $U_{2,4}$ is not a minor of $M$. Indeed, regular matroids are precisely the orientable binary matroids \cite[Theorem 7.9.3]{bjorner1999oriented}, so the claim follows from \cite[Theorem 6.5.4]{oxley2006matroid}.

\medskip

Now, let $M$ be an oriented matroid on a linearly ordered set $E$.
Consider a reorientation $-_AM$ of $M$ such that $A$ does not contain the minimum element of a positive circuit or cocircuit of $-_AM$, we call such a reorientation \emph{circuit-cocircuit minimal} (with respect to $M$); the terminology here is from \cite{backman2014riemann}, and it is called \emph{active fixed and dual-active fixed} in \cite{gioanlasvergans2018AB, gioanlasvergans2018AB2}.
Similarly, we can define a \emph{circuit minimal} (or \emph{active-fixed}), resp. a \emph{cocircuit minimal}  (or \emph{dual-active-fixed}),  reorientation $-_AM$ of $M$ when $A$ does not contain the minimum element of a positive circuit, resp. cocircuit.
Let us denote by $t(M;x,y)$ the Tutte polynomial of $M$.
From the works on {\em active bijections} \cite{gioan2002thesis,gioan2005activity, gioanlasvergans2018AB, gioanlasvergans2018AB2}, we have:

\begin{theorem} \label{CCMO_main}
Let $M$ be an oriented matroid on a linearly ordered set.
Then
 \begin{enumerate}[leftmargin=10mm]
 \item $t(M;1,1)=\#$ circuit-cocircuit minimal reorientations of $M$,
 \item $t(M;1,2)=\#$ cocircuit minimal reorientations of $M$,
 \item $t(M;2,1)=\#$ circuit minimal reorientations of $M$,
 \item $t(M;1,0)=\#$ (circuit-)cocircuit minimal acyclic reorientations of $M$,
 \item $t(M;0,1)=\#$ circuit(-cocircuit) minimal totally cyclic reorientations of~$M$.
 \end{enumerate}
\end{theorem}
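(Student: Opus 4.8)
The plan is to deduce all five identities from the \emph{activity expansion} of the Tutte polynomial together with the active bijection of \cite{gioan2005activity, gioanlasvergans2018AB, gioanlasvergans2018AB2}. Recall that, for a matroid on a linearly ordered ground set,
\[
t(M;x,y)=\sum_{B}x^{i(B)}y^{e(B)},
\]
where the sum runs over bases $B$, and $i(B)$ (resp.\ $e(B)$) counts the internally (resp.\ externally) active elements of $B$, i.e.\ those $b\in B$ (resp.\ $f\notin B$) that are the minimum of the fundamental cocircuit $C^*(B,b)$ (resp.\ the fundamental circuit $C(B,f)$). Substituting the five pairs $(x,y)=(1,1),(1,2),(2,1),(1,0),(0,1)$ then reads off, over bases, the quantities $1$, $2^{e(B)}$, $2^{i(B)}$, the indicator of $e(B)=0$, and the indicator of $i(B)=0$, respectively (using $0^0=1$). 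So it suffices to realize each of these summands by the corresponding family of minimal reorientations.

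The key step I would isolate is a fiber-counting lemma: there is a surjection $\Phi$ from the $2^{|E|}$ reorientations of $M$ onto its bases such that the fiber $\Phi^{-1}(B)$ carries a product structure of size $2^{i(B)+e(B)}$, with $i(B)$ binary coordinates governed by the internally active (cocircuit) elements and $e(B)$ binary coordinates governed by the externally active (circuit) elements. Granting this, a reorientation of $\Phi^{-1}(B)$ is \emph{cocircuit minimal} exactly when the $i(B)$ internal coordinates sit in their distinguished position (leaving the $2^{e(B)}$ external choices free), \emph{circuit minimal} exactly when the $e(B)$ external coordinates are distinguished, and \emph{circuit-cocircuit minimal} exactly when all coordinates are distinguished. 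Summing over bases yields $\sum_B 2^{e(B)}=t(M;1,2)$, $\sum_B 2^{i(B)}=t(M;2,1)$, and $\sum_B 1 = t(M;1,1)$, which are statements (2), (3), and (1).

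To construct $\Phi$ and verify the fiber structure, I would induct on $|E|$ by deletion--contraction along the greatest element $\omega$, matching the Tutte recursion. The degenerate cases are immediate: since the signed circuits and cocircuits are closed under negation, a coloop always yields the positive cocircuit $\{\omega\}$, forcing $\omega\notin A$ under cocircuit minimality while leaving $\omega$ free under circuit minimality, and dually a loop always yields a positive circuit $\{\omega\}$; this reproduces the factors of $x$ or $y$. For statements (4) and (5) I would restrict $\Phi$ to the acyclic, resp.\ totally cyclic, reorientations, using that circuit reversals act on the totally cyclic part and cocircuit reversals on the acyclic part (as recalled above). On an acyclic reorientation there are no positive circuits, so circuit minimality is vacuous and the circuit-cocircuit minimal acyclic reorientations are the cocircuit minimal ones lying in fibers with $e(B)=0$, giving $\#\{B:e(B)=0\}=t(M;1,0)$; the totally cyclic case is dual and gives $t(M;0,1)$.

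The main obstacle is the non-degenerate case of the induction, where $\omega$ is neither a loop nor a coloop: one must partition the reorientations of $M$ between those descending from $M\setminus\omega$ and those from $M/\omega$ so that activities and all minimality conditions are preserved. The difficulty is that whether an element is the minimum of a \emph{positive} circuit or cocircuit depends on the reorientation $-_AM$ itself rather than on $M$ alone, so the minimality constraints are self-referential and do not obviously localize to a minor. Controlling this interaction---essentially showing that reversing $\omega$ toggles exactly one active coordinate, compatibly with both deletion and contraction---is the technical heart of the active bijection, and is precisely what is supplied by \cite{gioan2005activity, gioanlasvergans2018AB, gioanlasvergans2018AB2}.
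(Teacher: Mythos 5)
Your outline is sound, but it takes a genuinely different (and heavier) route than the paper. The paper never passes through bases: its argument lives entirely on the orientation side. It partitions the $2^{|E|}$ reorientations into \emph{activity classes} (the orbit of a reorientation under reorienting parts of its active partition), notes that a class with $o$ active and $o^*$ dual-active elements has $2^{o+o^*}$ members of which exactly $2^{o}$ are cocircuit minimal, exactly $2^{o^*}$ are circuit minimal, and exactly one is circuit-cocircuit minimal, and then evaluates Las Vergnas's orientation-activity expansion $t(M;x,y)=\sum_{A\subseteq E}(x/2)^{o^*(A)}(y/2)^{o(A)}$ at the five points; acyclicity, resp.\ total cyclicity, is the class-invariant condition $o=0$, resp.\ $o^*=0$, which handles items (4) and (5). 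Your key lemma --- an activity-preserving surjection $\Phi$ onto bases with product fibers of size $2^{i(B)+e(B)}$ --- is the active bijection itself, i.e., the main theorem of the cited Gioan--Las Vergnas papers, and is strictly stronger than what is needed; the deletion--contraction difficulty you flag in the non-degenerate case is exactly where that extra strength must be paid for. Both write-ups ultimately defer the technical core to the same references, so your argument is not wrong, but it buys a canonical basis attached to each activity class at the cost of invoking the deeper result; substituting the orientation-activity expansion for the basis-activity expansion lets you drop $\Phi$ entirely and keep only the active-partition combinatorics.
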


Let us explain this briefly; details can be found in \cite{gioan2002thesis,gioan2005activity, gioanlasvergans2018AB, gioanlasvergans2018AB2}.
The {\em active partition} of $M$ is a partition of its ground set induced by taking differences of unions of positive circuits/cocircuits whose the minimum element is greater than a given element.
Therefore, the minimum elements of the parts are precisely the minimum elements of some positive circuits/cocircuits (called {\em active/dual-active elements}).
Activity classes of reorientations are the sets of reorientations obtained from a given reorientation by arbitrarily reorienting parts of its active partition.
It turns out that all reorientations obtained by this way share the same active partition. Hence activity classes partition the set of reorientations.
By choosing a suitable reorientation for each part, each activity class contains a unique circuit-cocircuit minimal reorientation, which can be thought of as a representative of the class.
Finally, using a classical formula of the Tutte polynomial in terms of orientation activities \cite{lasvergnas1984activity}, one enumerates activity classes and gets the above evaluations.

\section{Results}

We first give a noteworthy property relating reversal classes and activity classes.

\begin{proposition} \label{prop:CCMO_rep}
Let $M$ be an oriented matroid on a linearly ordered ground set.
Every circuit-cocircuit reversal class of $M$ contains at least one circuit-cocircuit minimal  reorientation.
\end{proposition}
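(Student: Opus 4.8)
The plan is to argue by a \emph{greedy descent}: starting from an arbitrary reorientation $-_AM$ in a given class, I will repeatedly reverse a positive circuit or cocircuit witnessing the failure of minimality, and show that a suitable integer potential strictly decreases at each step, forcing the process to terminate at a circuit-cocircuit minimal reorientation lying in the same class.

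Concretely, write $E = \{e_1 < e_2 < \cdots < e_n\}$ and assign to each subset $A \subseteq E$ the weight
\[
\phi(A) = \sum_{e_j \in A} 2^{\,n-j},
\]
that is, read $A$ as a binary number with the smallest element as most significant bit, so that $0 \le \phi(A) \le 2^n - 1$. Suppose $-_AM$ is not circuit-cocircuit minimal. Then by definition there is a positive circuit or cocircuit $C$ of $-_AM$ whose minimum element $m = \min C$ lies in $A$; reversing $C$ replaces $A$ by $A' = A \triangle C$ and keeps us in the same circuit-cocircuit reversal class.

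The key step is to estimate $\phi(A') - \phi(A)$. Every element of $C$ other than $m$ exceeds $m$, and $m$ itself is removed from $A$ by the reversal since $m \in A \cap C$. Writing $m = e_j$, the removal of $m$ contributes $-2^{n-j}$, whereas the elements of $C \setminus A$ that are added all have index larger than $j$ and hence contribute at most $\sum_{i>j} 2^{n-i} = 2^{n-j}-1$. Therefore
\[
\phi(A') - \phi(A) \;\le\; \bigl(2^{n-j}-1\bigr) - 2^{n-j} \;=\; -1,
\]
so $\phi$ drops by at least $1$ no matter which witnessing circuit or cocircuit is chosen.

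Since $\phi$ is a nonnegative integer bounded by $2^n-1$, the descent must terminate after finitely many reversals. The terminal reorientation admits no positive circuit or cocircuit whose minimum element lies in its reorientation set, i.e., it is circuit-cocircuit minimal, and it belongs to the original class. I expect the only delicate point to be the choice of weights: the geometric decay $2^{n-j}$ must dominate the combined effect of the (possibly many) larger-indexed elements toggled by the reversal, which is precisely what the bound $\sum_{i>j} 2^{n-i} < 2^{n-j}$ ensures; beyond the definitions, no further oriented-matroid structure is needed.
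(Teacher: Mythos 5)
Your proof is correct, and it follows the same strategy as the paper's: greedily reverse a positive circuit or cocircuit whose minimum element lies in the current reorientation set, and show the process terminates. The only difference is the termination certificate: the paper argues by contradiction (if the process cycled, the smallest element ever reoriented would have to be toggled twice, but after its first reversal it leaves the reorientation set and can never re-enter, since any circuit containing it has it as minimum), whereas you exhibit an explicit monovariant $\phi(A)=\sum_{e_j\in A}2^{n-j}$ that strictly decreases. Both arguments rest on the identical observation that a reversal always removes the minimum element $m$ of the reversed circuit from $A$ while only elements larger than $m$ can be added; your version has the minor added benefit of bounding the number of greedy steps by $2^n-1$.
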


The following proof is essentially given in \cite{backman2018partial} for graphs, but for the sake of interest and completeness, we include it here.
A corollary of the proof is that a minimal reorientation can be obtained greedily. Moreover, we note that there is an interpretation using combinatorial commutative algebra \cite[Section 4]{backman2018partial}.

\begin{proof}
Start with an arbitrary reorientation of $M$, and greedily reorient any positive (co)circuit whose minimal element is in the set of reoriented elements with respect to $M$.
Once the procedure stops, we will have a circuit-cocircuit minimal reorientation equivalent to the starting reorientation, so it suffices to show the procedure always terminates.
If this is not the case, then, since the number of reorientations is finite, without loss of generality, we must return to the starting reorientation.
Let $e$ be the minimal element that was reoriented (which must occur at least twice) in the process.
When $e$ was reoriented for the first time, we must have reoriented it to remove it from the set of reoriented elements with respect to $M$, so the second reorientation is not valid, a contradiction.
\end{proof}

By combining Theorem \ref{CCMO_main} and Proposition \ref{prop:CCMO_rep}, we can use the set of circuit-cocircuit minimal reorientations (and variations thereof) as an intermediate object, and get the following corollary concerning the enumeration of reversal classes in terms of the Tutte polynomial.

\begin{corollary} \label{coro:CCMO_red}
Let $M$ be an oriented matroid on a linearly ordered ground set.
The number of circuit-cocircuit reversal classes is at most $t(M;1,1)$, with equality if and only if no two circuit-cocircuit minimal reorientations are contained in the same class.
The number of acyclic cocircuit reversal classes is at most $t(M ; 1, 0)$, with equality if and only if no two acyclic cocircuit minimal reorientations are contained in the same class.
Analogous statements hold for each of the other settings in Theorem \ref{CCMO_main}.
\end{corollary}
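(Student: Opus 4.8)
The plan is to turn the counting results of Theorem~\ref{CCMO_main} and the representative property of Proposition~\ref{prop:CCMO_rep} into the stated inequalities by a one-line pigeonhole argument, and then to read off the equality conditions from injectivity.

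For the first assertion, I would consider the map $\phi$ sending each circuit-cocircuit minimal reorientation of $M$ to the unique circuit-cocircuit reversal class containing it. This map is well defined because the reversal classes partition the set of reorientations, and it is surjective precisely because Proposition~\ref{prop:CCMO_rep} guarantees that every class contains at least one circuit-cocircuit minimal reorientation. The domain of $\phi$ has cardinality $t(M;1,1)$ by Theorem~\ref{CCMO_main}(1), so the number of circuit-cocircuit reversal classes, being the cardinality of the image, is at most $t(M;1,1)$. Since a surjection of finite sets is a bijection if and only if it is injective, equality holds if and only if $\phi$ is injective, which says exactly that no two distinct circuit-cocircuit minimal reorientations lie in the same class. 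This is the first statement.

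For the acyclic cocircuit reversal classes I would repeat the same argument, once the corresponding representative property is available: every acyclic cocircuit reversal class contains at least one acyclic cocircuit minimal reorientation. This is obtained by rerunning the greedy procedure from the proof of Proposition~\ref{prop:CCMO_rep} using only positive cocircuit reversals and starting from an acyclic reorientation; the termination argument (via the smallest repeatedly reoriented element) is unchanged. The only additional point is that reversing a positive cocircuit of an acyclic reorientation again yields an acyclic reorientation, so that the procedure never leaves the acyclic world; this is the oriented-matroid form of the fact that reversing a directed cut preserves acyclicity, and it reflects that cocircuit reversals act on the acyclic part of $M$. With this in hand, Theorem~\ref{CCMO_main}(4) supplies the count $t(M;1,0)$ and the surjection-is-injection argument yields the bound and the equality condition.

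The remaining three settings are handled verbatim: for cocircuit reversal classes one pairs the cocircuit-only variant of Proposition~\ref{prop:CCMO_rep} with Theorem~\ref{CCMO_main}(2) ($t(M;1,2)$); for circuit reversal classes one uses circuit-only reversals with Theorem~\ref{CCMO_main}(3) ($t(M;2,1)$); and for totally cyclic circuit reversal classes one uses circuit-only reversals restricted to totally cyclic reorientations with Theorem~\ref{CCMO_main}(5) ($t(M;0,1)$). In each case the surjection from minimal reorientations to classes is built exactly as above. I expect the only genuine work to be the routine verification that the greedy-representative argument restricts correctly to each of these settings --- in particular, that circuit (resp.\ cocircuit) reversals preserve total cyclicity (resp.\ acyclicity), so that the totally cyclic and acyclic cases are closed under the allowed moves and the analogue of Proposition~\ref{prop:CCMO_rep} indeed holds there. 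The counting core itself is immediate once these representative properties are established.
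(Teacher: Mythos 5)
Your proposal is correct and follows the same route as the paper: the paper's proof is exactly the comparison of Theorem \ref{CCMO_main} with Proposition \ref{prop:CCMO_rep} (and its restricted counterparts), using the fact that circuit and cocircuit reversals preserve the totally cyclic and acyclic parts. You merely spell out the surjection-versus-injection pigeonhole step and the restricted greedy argument that the paper leaves implicit.
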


\begin{proof}
The first statement follows from comparing Equation~(1) of Theorem \ref{CCMO_main} and Proposition \ref{prop:CCMO_rep}.
The variations follow from comparing the other equations of Theorem \ref{CCMO_main} and the corresponding counterparts of Proposition \ref{prop:CCMO_rep}, since circuit and cocircuit reversals preserve the acyclic and totally cyclic parts of reorientations.
\end{proof}

\medskip

Now we prove the main theorem of this note, which includes the original results of \cite{gioan2008circuit} (the enumerations when $M$ is regular, with a new proof) and their converses.

\begin{theorem} \label{thm}
Let $M$ be an oriented matroid. Consider the following six statements:
\begin{enumerate}
\item $M$ is regular,
\item $t(M;1,1)=\#$ circuit-cocircuit reversal classes of $M$,
\item $t(M;1,2)=\#$ cocircuit reversal classes of $M$,
\item $t(M;2,1)=\#$ circuit reversal classes of $M$,
\item $t(M;1,0)=\#$ acyclic (circuit-)cocircuit reversal classes of $M$,
\item $t(M;0,1)=\#$ totally cyclic circuit(-cocircuit) reversal classes of $M$.
\end{enumerate}

Then we have the following implications: (1) implies all other statements; (2), (3), (4) each implies (1); (5) implies (1) if $M$ has no loops; (6) implies (1) if $M$ has no coloops.
In particular, if $M$ has no loops nor coloops, then all statements are equivalent. Moreover, if any of the equalities fail, then the left hand side is larger.
\end{theorem}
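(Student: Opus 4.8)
The plan is to reduce the entire theorem to a single dichotomy coming from Corollary \ref{coro:CCMO_red}: for each of the five enumerative statements, the relevant Tutte evaluation is an upper bound for the number of reversal classes, with equality precisely when no reversal class contains two distinct minimal reorientations. In particular the final sentence (``the left hand side is larger'') is immediate from the Corollary, and each of (2)--(6) holds if and only if the surjection from minimal reorientations onto reversal classes furnished by Proposition \ref{prop:CCMO_rep} is a bijection. I would first record two simplifications. By matroid duality, which exchanges circuits with cocircuits, sends $t(M;x,y)$ to $t(M^{\ast};y,x)$, and preserves regularity, statements (3),(6) are the duals of (2),(5), so it suffices to treat (2),(4),(5) and transport the rest. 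For the loop/coloop hypotheses, I would observe that a loop forces every reorientation to contain the positive circuit it spans, so there are no acyclic reorientations and $t(M;1,0)=0$; thus (5) holds vacuously and cannot detect regularity, explaining the hypothesis, and dually for (6) and coloops.

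For the forward implications (1)$\Rightarrow$(2)--(6) I would argue that a regular oriented matroid has a unique minimal reorientation in each reversal class. Fixing a totally unimodular representation, the signed circuits and cocircuits become the $\{0,\pm1\}$-vectors of the integral cycle lattice $\mathcal Z$ and cocycle lattice $\mathcal Z^{\ast}$, which are orthogonal and of complementary full rank, with index $[\,\mathbb Z^E:\mathcal Z\oplus\mathcal Z^{\ast}\,]=t(M;1,1)$. A circuit (resp. cocircuit) reversal alters the $\pm1$ orientation vector by twice a generator of $\mathcal Z$ (resp. $\mathcal Z^{\ast}$), so reversal equivalence refines the coset structure of this lattice; orthogonality guarantees that a nontrivial closed reversal loop cannot fix any orientation, and a short confluence (diamond) argument applied to the greedy process of Proposition \ref{prop:CCMO_rep} then shows that the minimal reorientation it reaches is independent of the choices, hence unique in its class, giving (4). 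Restricting the same orthogonality argument to the cocircuit lattice on acyclic reorientations gives (5), to the circuit lattice on totally cyclic reorientations gives (6), and the two mixed counts (2),(3) follow in the same manner.

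For the converse implications I would argue by contraposition: if $M$ is not regular then $U_{2,4}$ is a minor of $M$, and I would transfer the failure of equality from $U_{2,4}$ up to $M$. The base case is a direct computation: for $U_{2,k}$ one has $t(U_{2,k};1,1)=\binom{k}{2}$ and $t(U_{2,k};1,0)=k-1$, whereas by \cite[Proposition 2]{gioan2008circuit} the number of acyclic cocircuit reversal classes is only $1$ or $2$; since $U_{2,4}$ is self-dual, the remaining variants collapse identically, so for $k\ge 4$ every equality (2)--(6) fails with the left side strictly larger, i.e. in each of the five systems some reversal class of $U_{2,4}$ contains two distinct minimal reorientations. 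It then remains to prove a lifting lemma: if a minor $N=M/I\setminus J$ has two distinct minimal reorientations in one reversal class, then so does $M$.

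The lifting lemma is the main obstacle. By duality it suffices to handle a single deletion and a single contraction and then induct on $|I|+|J|$; here I would exploit that the number of reversal classes is independent of the linear order to place the four ground-set elements of the minor below all of $I\cup J$, and in the inductive step to make the distinguished deleted or contracted element $e$ the global minimum. The difficulty is that positive circuits of $M/e$ lift to positive circuits of $M$ that may use $e$, and positive cocircuits of $M\setminus e$ lift to cocircuits of $M$ that may use $e$, so replaying the equivalence inside $M$ both requires the current sign of $e$ to be compatible with each such reversal and flips $e$ an uncontrolled number of times. Choosing $e$ minimal settles minimality for free, since any positive (co)circuit through $e$ has its minimum at $e$ and so never obstructs; the real work is to realize the lifted sequence as a genuine chain of positive reversals in $M$ and to control the parity of the flips of $e$ so that the two endpoints remain distinct minimal reorientations of $M$, and for the acyclic and totally cyclic variants to check that the lift stays acyclic or totally cyclic, which is exactly where the absence of loops or coloops is used. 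I expect this sign-and-parity bookkeeping for the distinguished element to be the crux of the whole argument.
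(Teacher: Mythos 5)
Your framing via Corollary \ref{coro:CCMO_red} is exactly the paper's, and the duality and loop/coloop reductions are sound (modulo the slip that (3) is the dual of (4), not of (2); (2) is self-dual, so treating (2),(4),(5) still suffices). But the forward direction has a genuine gap: you assert that ``orthogonality guarantees that a nontrivial closed reversal loop cannot fix any orientation'' and that ``a short confluence (diamond) argument'' makes the greedy normal form unique in each class, yet local confluence of the greedy rewriting is precisely the point where regularity must enter (it fails for $U_{2,4}$), and you never prove it. The paper supplies the missing lemma directly: using a totally unimodular representation it shows that if $C$ is a positive circuit of $M$ and $D$ is a positive circuit of $-_CM$, then $C\triangle D$ decomposes as a disjoint union of positive circuits of $M$; hence any two reversal-equivalent reorientations differ by a disjoint union of positive circuits and cocircuits, and at most one of them can be minimal. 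Some statement of this kind (or an actual proof of local confluence) is indispensable; without it the implications $(1)\Rightarrow(2)$--$(6)$ are not established, even though your lattice-theoretic strategy could in principle be completed along the lines of the older proof in the literature.

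The converse direction is a larger problem. You reduce everything to a lifting lemma --- that two minimal reorientations in one reversal class of a minor $M/I\setminus J$ lift to the same configuration in $M$ --- and you explicitly leave it unproved, calling the sign-and-parity bookkeeping ``the crux of the whole argument.'' That is indeed where your route would live or die, and it is far from clear it can be carried out: positive (co)circuits of the minor lift to (co)circuits of $M$ through the removed elements, and controlling how those elements flip along a lifted reversal sequence, while keeping both endpoints distinct, minimal, and acyclic, is exactly the difficulty. The paper avoids lifting entirely: after enlarging $A$ so that $M/A$ is loopless of rank $2$ and reorienting so that $M$ and $M/A$ are acyclic, it takes the two positive cocircuits $C,D$ of $M/A$ --- these are cocircuits of $M$ itself --- so that $-_CM$ and $-_DM$ are visibly in the same acyclic cocircuit reversal class; it then chooses the linear order to place $S=C\triangle D$ above everything else, and the key observation that no cocircuit of $M$ is contained in $S$ (this is where $U_{2,4}$ versus $U_{2,3}$ is used) makes both reorientations cocircuit minimal. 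You should replace the inductive lifting by such a direct construction; as written, your converse direction is a plan rather than a proof.
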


\begin{proof} Let us separate implications.

\smallskip

$\bullet$ $(1)\Rightarrow (2)$. We give an alternative proof to that of \cite{gioan2008circuit}.
By Corollary \ref{coro:CCMO_red}, it suffices to show that every circuit-cocircuit reversal class contains a unique circuit-cocircuit minimal reorientation.
We claim that any two reorientations within a reversal class differ by a disjoint union of positive circuits and cocircuits, which will imply that at most one of them can be minimal, thus proving the implication.

By induction and restricting to the totally circuit part (the acyclic part follows from duality), it suffices to show that if $C$ is a positive circuit of $M$ and $D$ is a positive circuit of $-_{C}M$, then $C\triangle D$ is a disjoint union of positive circuits of $M$.
By \cite[Corollary 7.9.4]{bjorner1999oriented}, we may assume that some totally unimodular matrix $Q$ realizes $M$.
By total unimodularity, if $C$ is a positive circuit of $M$, then the characteristic vector $\rchi_C\in\{0,1\}^E$ of $C$ is in the kernel $\ker(Q)$ of $Q$; similarly,  since $D$ is a positive circuit of $-_CM$, $\rchi_{D\setminus C}-\rchi_{C\cap D}\in\ker(Q)$.
Since their sum $\rchi_{C\triangle D}$ is in $\ker(Q)$, $C\triangle D$ is a positive vector and contains some positive circuit $C_1$ of $M$, thus $\rchi_{(C\triangle D)\setminus C_1}=\rchi_{C\triangle D}-\rchi_{C_1}\in\ker(Q)$.
Proceeding by induction, we can write $C\triangle D$ as a disjoint union of positive circuits.

\smallskip

$\bullet$ $(1)\Rightarrow (3),(4),(5),(6)$. Alternatively to \cite{gioan2008circuit}, the proofs are similar to the above one by restricting to circuit reversals only, then restricting to totally cyclic reorientations only, and then taking duals.

\smallskip

$\bullet$ $(5)\Rightarrow (1)$ assuming $M$ has no loops. 
Suppose $M$ is not regular. Then it has a minor $M/A\setminus B$ that is isomorphic to $U_{2,4}$.
By Corollary \ref{coro:CCMO_red}, it suffices to show that there are two cocircuit minimal acyclic orientations in the same (circuit-)cocircuit reversal acyclic class.
Up to enlarging $A$, we can assume that $M/A$ is loopless and that the rank of $M/A$ equals $2$. 
Since $M$ is loopless, up to reorientation, we can assume that $M$ and $M/A$ are acyclic.
Let $C$ and $D$ be the two positive cocircuits of $M/A$ (which are also cocircuits of $M$).
Thus, $-_CM$ and $-_DM$ are in the same acyclic reversal class.
Denote $S=C\triangle D$.
Since the rank of $M/A$ is $2$, any cocircuit of $M/A$ is the union of all parallel classes but one.
Since $S$ is the union of  two parallel classes ($C\setminus D$ and $D\setminus C$) of $M/A$, no cocircuit of $M$ is contained in $S$ (otherwise, the complement of $S$ is a parallel class in $M/A$, and reducing parallel classes of $M/A$ yields $U_{2,3}$, a contradiction).
Choose a linear ordering of $E$ such that elements of $S$ are greater than elements of $E\setminus S$, then $S$ does not contain the minimum element of a cocircuit (otherwise it would contain a cocircuit), thus $-_CM$ and $-_DM$ are both cocircuit minimal.

\smallskip

$\bullet$  $(2)\Rightarrow (1)$. Suppose $M$ is not regular.
Then $M'$, the oriented matroid obtained from removing all loops of $M$, is also not regular.
By the implication $(5)\Rightarrow (1)$ for $M'$, there exist distinct acyclic reorientations $-_{A}M'$ and $-_{B}M'$ that are (circuit-)cocircuit reversal equivalent and both (circuit-)cocircuit minimal.
Now $-_{A}M$ and $-_{B}M$ are circuit-cocircuit reversal equivalent reorientations of $M$ that are both circuit-cocircuit minimal.
The implication follows from Corollary \ref{coro:CCMO_red}.

\smallskip

 $\bullet$ $(3)\Rightarrow (1)$. The proof is the same as for $(2)\Rightarrow (1)$ except that, at the end, $-_{A}M$ and $-_{B}M$ are cocircuit reversal equivalent reorientations of $M$ that are both cocircuit minimal.
The implication again follows from Corollary \ref{coro:CCMO_red}.

\smallskip

 $\bullet$ $(4)\Rightarrow (1)$, and $(6)\Rightarrow (1)$ assuming $M$ has no coloops.  The two implications are the dual statements of $(3)\Rightarrow (1)$ and $(5)\Rightarrow (1)$, respectively.

\smallskip

Finally, let us mention that the relations between the implication $(5)\Rightarrow (1)$ and the other ones could also be handled from the decomposition into acyclic and totally cyclic parts, along with the convolution formula for the Tutte polynomial, similarly as in \cite{gioan2008circuit}.
\end{proof}

We end with an open question.
By direct computation, the number of circuit-cocircuit reversal classes and the number of bases differ by a rather large margin for small non-regular oriented matroids.
So, does there exist an absolute constant $K>1$ such that the number of bases of a non-regular oriented matroid is at least $K$ times the number of circuit-cocircuit reversal classes?

\section*{Acknowledgements}

The second author, Chi Ho Yuen, would like to thank Matthew Baker for suggesting the problem, and Spencer Backman for introducing him the work of the first author, Emeric Gioan.
The role of the first author of this paper has been to simplify and extend
a preprint written at the initiative of the second author.

\vspace{-2mm}

\bibliographystyle{plain}

\end{document}